\DeclareFontFamily{U}{euf}{}
\DeclareFontShape{U}{euf}{m}{n}{%
  <5><6><7><8><9>gen*eufm%
  <10><10.95><12><14.4><17.28><20.74><24.88>eufm10%
  }{}
\DeclareFontShape{U}{euf}{b}{n}{%
  <5><6><7><8><9>gen*eufb%
  <10><10.95><12><14.4><17.28><20.74><24.88>eufb10%
  }{}
\DeclareFontFamily{U}{msb}{}
\DeclareFontShape{U}{msb}{m}{n}{%
  <5><6><7><8><9>gen*msbm%
  <10><10.95><12><14.4><17.28><20.74><24.88>msbm10%
  }{}
\DeclareFontFamily{U}{msa}{}
\DeclareFontShape{U}{msa}{m}{n}{%
  <5><6><7><8><9>gen*msam%
  <10><10.95><12><14.4><17.28><20.74><24.88>msam10%
  }{}
\newtheorem{theorem}{Theorem}
\newtheorem{corollary}[theorem]{Corollary}
\theoremstyle{definition}
\newtheorem{remark}[theorem]{Remark}
\newcommand{\abs}[1]{\lvert#1\rvert}
\begin{document}

\title[generalizations of Wallis' formula]
{Euler's transformation, zeta functions and generalizations of Wallis' formula}

 \author{Qianqian Cai}
\address{Department of Mathematics, South China University of Technology, Guangzhou, Guangdong 510640, China}
\email{jenny.royce@foxmail.com}

\author{Su Hu}
\address{Department of Mathematics, South China University of Technology, Guangzhou, Guangdong 510640, China}
\email{mahusu@scut.edu.cn}

\author{Min-Soo Kim}
\address{Department of Mathematics Education, Kyungnam University, Changwon, Gyeongnam 51767, Republic of Korea}
\email{mskim@kyungnam.ac.kr}

\subjclass[2020]{11M06; 11B68; 11Y60; 40G05; 26B20}
\keywords{Euler's transformation, Zeta function, Wallis' formula, Euler-Mascheroni constant, Glaisher-Kinkelin constant}

\maketitle

\begin{abstract}
In this note, we extend Euler's transformation formula from the alternating series
to more general series.
Then we give new expressions for the Riemann zeta  function $\zeta(s)$ by the generalized difference operator $\Delta_{c}$,
which provide analytic continuation of $\zeta(s)$ and new ways to evaluate the special values of $\zeta(-m)$ for $m=0,1,2,\ldots$.
Applying these results, we further extend Huylebrouck's generalization of Wallis'  well-known formula for $\pi$ in the half 
planes Re$(s)>0$ and Re$(s)>-1$, respectively. They imply several interesting special cases including
$$
\frac{2\pi}{3^{\frac{3}{2}}}=\frac{3^{\frac{4}{3}}}{2^{\frac{4}{3}}}
\frac{2^{\frac{1}{3}}\cdot3^{\frac{1}{3}}\cdot3^{\frac{1}{3}}\cdot4^{\frac{1}{3}}\cdot6^{\frac{2}{3}}\cdot6^{\frac{2}{3}}}{4^{\frac{1}{3}}\cdot4^{\frac{1}{3}}\cdot5^{\frac{1}{3}}\cdot5^{\frac{1}{3}}\cdot4^{\frac{2}{3}}\cdot5^{\frac{2}{3}}}\cdots,
$$
$$
 3^{\gamma-\frac{\log 3}{2}}=\frac{3^{\frac{1}{3}}\cdot3^{\frac{1}{3}}}{2^{\frac{1}{2}}\cdot4^{\frac{1}{4}}} \frac{6^{\frac{1}{6}}\cdot6^{\frac{1}{6}}}{5^{\frac{1}{5}}\cdot7^{\frac{1}{7}}}\frac{9^{\frac{1}{9}}\cdot9^{\frac{1}{9}}}{8^{\frac{1}{8}}\cdot10^{\frac{1}{10}}}\cdots,
$$
and
$$
\left(3\left(\frac{2\pi e^{\gamma}}{A^{12}}\right)^{2}\right)^{\frac{\pi^2}{18}}=\frac{3^{\frac{1}{3^2}}\cdot3^{\frac{1}{3^2}}}{2^{\frac{1}{2^2}}\cdot4^{\frac{1}{4^2}}} \frac{6^{\frac{1}{6^2}}\cdot6^{\frac{1}{6^2}}}{5^{\frac{1}{5^2}}\cdot7^{\frac{1}{7^2}}}\frac{9^{\frac{1}{9^2}}\cdot9^{\frac{1}{9^2}}}{8^{\frac{1}{8^2}}\cdot10^{\frac{1}{10^2}}}\cdots,$$
where $\gamma$ is the Euler-Mascheroni constant and $A$ is the Glaisher-Kinkelin constant.
\end{abstract}

\section{Introduction}
The Riemann zeta function is defined by
\begin{equation}~\label{Ri-zeta}
\zeta(s)=\sum_{n=1}^{\infty}\frac{1}{n^{s}},
\end{equation}
for Re$(s)>1$.
It can be analytically continued to a meromorphic function in the
complex plane with a simple pole at $s=1$.
And the Dirichlet eta function is an alternating form of $\zeta(s)$, 
 \begin{equation}~\label{Euler}
\eta(s)=\sum_{n=1}^{\infty}\frac{(-1)^{n-1}}{n^{s}},
\end{equation}
for Re$(s)>0$.
It  can be analytically continued  to the complex plane without any pole. 
For Re$(s)>0$, (\ref{Ri-zeta}) and (\ref{Euler}) are connected by the following equation
\begin{equation}~\label{Riemann-Euler}
\eta(s)=(1-2^{1-s})\zeta(s).
\end{equation}
The Dirichlet eta function $\eta(s)$ appeared as a tool for Euler's derivation of the functional equation for $\zeta(s)$.
In fact, according to Weil's history~\cite[p.~273--276]{Weil},
Euler  ``proved"
\begin{equation}~\label{fe}
	\frac{\eta(1-s)}{\eta(s)}=\frac{-\Gamma(s)(2^{s}-1)\textrm{cos}(\pi s/2)}{(2^{s-1}-1)\pi^{s}},
\end{equation}
then from  (\ref{Riemann-Euler}) he got the functional equation of $\zeta(s)$.

In order to apply $\eta(s)$ to calculate the special values $\zeta(-m)$ 
for $m=0,1,2,\ldots,$  Euler introduced the following transformation of alternating series (see \cite[volume 10, p. 222--227]{Euler}).
Let $A_2=\sum_{n=1}^{\infty}(-1)^{n+1}b_n$ be an alternating series (see \eqref{gseries}). It can be written as
\begin{equation}\label{c2}
\begin{aligned}
A_2=&b_1-b_2+b_3-b_4+\cdots\\
=&\frac{1}{2} b_{1}+\frac{1}{2}\left[\left(b_{1}-b_{2}\right)-\left(b_{2}-b_{3}\right)+\cdots\right] \\
=&\frac{1}{2} b_{1}+\frac{1}{4}\left(b_{1}-b_{2}\right)+\frac{1}{4}\left[\left(b_{1}-2 b_{2}+b_{3}\right)-\left(b_{2}-2 b_{3}+b_{4}\right)+\cdots\right].
\end{aligned}
\end{equation}
So inductively, in general, we have
\begin{equation}\label{c2k}
	\sum_{n=1}^{\infty}(-1)^{n+1} b_{n}=\sum_{j=0}^{k-1} \frac{\Delta^{j} b_{1}}{2^{j+1}}+\sum_{n=1}^{\infty}(-1)^{n+1} \frac{\Delta^{k} b_{n}}{2^{k}},
\end{equation}
where the sequence of difference operators $\{\Delta^{k}\}_{k=1}^{\infty}$ is
defined recursively by $\Delta^{0} b_{n}=b_{n}$ and
\begin{equation}\label{d2}
	\Delta^{k} b_{n}=\Delta^{k-1} b_{n}-\Delta^{k-1} b_{n+1}
\end{equation}
for $k \geq 1$.
In 1994, by using Euler's transformation, Sondow \cite{Sondow2} obtained a new expression for $\eta(s),$ which implies an analytic continuation of
the Riemann zeta function $\zeta(s)$ to complex numbers $s\neq 1$. His main result is as follows.
\begin{theorem}[{Sondow \cite[p. 423, (8)]{Sondow2}}]\label{Sondow-the1}
	For $k \geq 1$, the expression
	\begin{equation}\label{Sondow1}
	\begin{aligned}
		\eta(s)&=\left(1-2^{1-s}\right)\zeta(s)\\
		&=\sum_{j=0}^{k-1} \frac{\Delta^{j} 1^{-s}}{2^{j+1}}+\frac{1}{2^{k}} \sum_{n=1}^{\infty}(-1)^{n-1} \Delta^{k} n^{-s},
		\end{aligned}
	\end{equation}
	provides the analytic continuation of $\zeta(s)$ on the punctured half plane Re$(s)>1-k$, $s \neq 1$ where the infinite series converges absolutely and uniformly on compact sets to a holomorphic function. Moreover, except that the convergence will not be absolute in the strip $-k<\text{Re}(s) \leq 1-k$, this remains true for $k \geq 0$ and $\text{Re}(s)>-k$, $s\neq1$. Especially, taking $k=1$, we have
	\begin{equation}\label{Sondow2}
	\begin{aligned}
		\eta(s)&=\left(1-2^{1-s}\right)\zeta(s)\\
		&=\frac{1}{2}+\frac{1}{2} \sum_{n=1}^{\infty}(-1)^{n-1}\left(n^{-s}-(n+1)^{-s}\right)
		\end{aligned}
	\end{equation}
	for Re$(s)>-1$ and $s \neq 1$.
\end{theorem}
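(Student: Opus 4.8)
The plan is to show that the right-hand side of (\ref{Sondow1}) defines a holomorphic function on each of the stated domains and that it agrees there with $\eta(s)=(1-2^{1-s})\zeta(s)$, the classical meromorphic continuation of the Dirichlet eta function; everything will rest on one elementary estimate for iterated differences. Writing $\Delta g(x)=g(x)-g(x+1)=-\int_{0}^{1}g'(x+t)\,dt$ and iterating,
\begin{equation*}
\Delta^{k}g(x)=(-1)^{k}\int_{0}^{1}\!\!\cdots\!\int_{0}^{1}g^{(k)}\!\left(x+t_{1}+\cdots+t_{k}\right)dt_{1}\cdots dt_{k};
\end{equation*}
applied to $g(x)=x^{-s}$, for which $g^{(k)}(x)=(-1)^{k}s(s+1)\cdots(s+k-1)\,x^{-s-k}$, this gives for every integer $n\ge1$ and every $s$ with $\mathrm{Re}(s)>-k$
\begin{equation*}
\bigl|\Delta^{k}n^{-s}\bigr|\le|s(s+1)\cdots(s+k-1)|\,n^{-\mathrm{Re}(s)-k},
\end{equation*}
because then the exponent $-\mathrm{Re}(s)-k$ is negative and $n+t_{1}+\cdots+t_{k}\ge n$; the displayed constant is bounded on compact subsets of $\mathbb{C}$.

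With this in hand I would first treat the half plane $\mathrm{Re}(s)>1-k$. The finite sum $\sum_{j=0}^{k-1}2^{-j-1}\Delta^{j}1^{-s}$ is a finite linear combination of the entire functions $1^{-s},\dots,k^{-s}$, hence entire; and on any compact subset of $\{\mathrm{Re}(s)>1-k\}$ the estimate above dominates $\sum_{n}(-1)^{n-1}\Delta^{k}n^{-s}$ by the convergent numerical series $\sum_{n}n^{-\sigma-k}$ (convergent since $\sigma=\mathrm{Re}(s)>1-k$ forces $\sigma+k>1$), so the Weierstrass $M$-test yields absolute and locally uniform convergence to a holomorphic function. Thus the whole right-hand side of (\ref{Sondow1}) is holomorphic on $\{\mathrm{Re}(s)>1-k,\ s\ne1\}$. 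To identify it with $\eta(s)$, observe that on $\{\mathrm{Re}(s)>1\}$ the series $\sum(-1)^{n-1}n^{-s}$ converges absolutely, so all the rearrangements used to derive (\ref{c2k}) are legitimate with $b_{n}=n^{-s}$; hence (\ref{Sondow1}) holds there, with $\eta(s)=(1-2^{1-s})\zeta(s)$ being classical. (Alternatively one sums by parts $k$ times and checks that the boundary terms are bounded multiples of $\Delta^{j}m^{-s}=O(m^{-\mathrm{Re}(s)})$ with $N-k\le m\le N$, which tend to $0$ for $\mathrm{Re}(s)>1$.) Since both sides of (\ref{Sondow1}) are holomorphic on $\{\mathrm{Re}(s)>1-k,\ s\ne1\}$ and coincide on the open set $\{\mathrm{Re}(s)>1\}$, the identity theorem propagates the equality to all of $\{\mathrm{Re}(s)>1-k,\ s\ne1\}$; in particular the right-hand side gives the analytic continuation of $\zeta$.

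For the larger half plane $\mathrm{Re}(s)>-k$ the $M$-test fails in the strip $-k<\mathrm{Re}(s)\le1-k$, so I would use Dirichlet's test instead. Put $a_{n}(s)=\Delta^{k}n^{-s}$. Applying the basic estimate with $k$ and then with $k+1$ gives $a_{n}(s)=O(n^{-\mathrm{Re}(s)-k})\to0$ together with
\begin{equation*}
\sum_{n\ge1}\bigl|a_{n}(s)-a_{n+1}(s)\bigr|=\sum_{n\ge1}\bigl|\Delta^{k+1}n^{-s}\bigr|\le C(s)\sum_{n\ge1}n^{-\mathrm{Re}(s)-k-1}<\infty
\end{equation*}
exactly when $\mathrm{Re}(s)>-k$, the constants and the series tails being uniformly controlled on compact subsets of $\{\mathrm{Re}(s)>-k\}$. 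Since the partial sums of $\sum(-1)^{n-1}$ are bounded, the uniform form of Dirichlet's test shows $\sum_{n}(-1)^{n-1}\Delta^{k}n^{-s}$ converges uniformly on compact subsets of $\{\mathrm{Re}(s)>-k\}$, hence to a holomorphic function; comparing with the previous paragraph on the overlap $\{\mathrm{Re}(s)>1-k\}$ and invoking the identity theorem once more gives (\ref{Sondow1}) on $\{\mathrm{Re}(s)>-k,\ s\ne1\}$ for every $k\ge0$, with convergence merely conditional in the strip. Finally, taking $k=1$ and using $\Delta^{0}1^{-s}=1$ and $\Delta^{1}n^{-s}=n^{-s}-(n+1)^{-s}$ specializes (\ref{Sondow1}) to (\ref{Sondow2}).

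The one genuine obstacle is the passage to the strip $-k<\mathrm{Re}(s)\le1-k$: one must make sure the Dirichlet/Abel argument delivers \emph{locally uniform} convergence in $s$, so that holomorphy, and hence the final identity-theorem step, is preserved, and for this the quantitative facts that $a_{N}(s)\to0$ and $\sum_{n\ge N}|\Delta^{k+1}n^{-s}|\to0$ uniformly on compacta are precisely what is needed; they are again immediate from the single difference estimate. The other ingredients, namely holomorphy via the $M$-test and the vanishing of the boundary terms when verifying the identity on $\mathrm{Re}(s)>1$, are routine.
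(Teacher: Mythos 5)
Your proposal is correct and follows essentially the same route as the paper: the paper quotes this theorem from Sondow but proves its generalization (Theorem \ref{gsondow}) by exactly your key lemma, namely the iterated-integral representation $\Delta^{k}n^{-s}=(s)_k\int_0^1\cdots\int_0^1(n+t_1+\cdots+t_k)^{-s-k}\,dt_1\cdots dt_k$ and the resulting bound $\abs{\Delta^{k}n^{-s}}\le\abs{(s)_k}n^{-\sigma-k}$, giving absolute convergence for $\sigma>1-k$ and, after passing to the $(k+1)$-st difference, locally uniform convergence for $\sigma>-k$. The only cosmetic difference is that you package the second step as Dirichlet's test/Abel summation, whereas the paper groups consecutive terms into blocks equal to $\Delta^{k+1}$ of the first index; these are the same device.
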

From this he successfully  got a new expression of $\zeta(-m)$ for $m=0,1,2,\ldots,$ 
\begin{equation}\label{s1}
	\zeta(-m)=\frac{1}{1-2^{m+1}} \sum_{j=0}^{m} \frac{\Delta^{j} 1^{m}}{2^{j+1}}
\end{equation}
(see \cite[p. 423, Corollary]{Sondow2}).

Euler's transformation and Sondow's result show a connection between Wallis' well-known formula for $\pi$ and zeta functions.
In a book published in 1656 \cite{Wallis}, John Wallis presented the following remarkable infinite product
representation of $\pi$
\begin{equation}\label{Wallis} 
\frac{\pi}{2}=\frac{2\cdot2}{1\cdot 3}\frac{4\cdot4}{3\cdot 5}\frac{6\cdot6}{5\cdot 7}\cdots,
\end{equation}
which has been quoted by many calculus books. Well-known proofs include an application of the formula for integrals of powers of $\sin x$ from the inductive method 
or an application of the infinite product expansion of $\sin x$. 
Taking derivatives on both sides of (\ref{Sondow2}) we obtain
\begin{equation}
\eta^{\prime}(0)=\frac{1}{2}\log\left(\frac{2\cdot 2}{1\cdot 3}\frac{4\cdot4}{3\cdot 5}\frac{6\cdot 6}{5\cdot 7}\cdots\right).
\end{equation}
From  this, Yung (in 1999) and Sondow (in 2002) found a new proof of Wallis'  formula (see \cite{Sondow}).

  In 2015, by extending Yung and Sondow's methods to other values of $s$, Huylebrouck~\cite{Huy} obtained the following result.
\begin{theorem}[{Huylebrouck \cite[p. 371, Theorem 1]{Huy}}]
For appropriate values of $s$ (and if $\sqrt[n^{0}]{n}$ is interpreted as $n$),
\begin{equation}\label{Theorem1}
e^{2\eta^{\prime}(s)}=\frac{\sqrt[2^s]{2}\cdot\sqrt[2^s]{2}}{\sqrt[1^s]{1}\cdot \sqrt[3^s]{3}}\frac{\sqrt[4^s]{4}\cdot\sqrt[4^s]{4}}{\sqrt[3^s]{3}\cdot \sqrt[5^s]{5}}\frac{\sqrt[6^s]{6}\cdot\sqrt[6^s]{6}}{\sqrt[5^s]{5}\cdot \sqrt[7^s]{7}}\cdots.\end{equation}
\end{theorem}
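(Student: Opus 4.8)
The plan is to differentiate Sondow's expression \eqref{Sondow2} for $\eta(s)$, exponentiate, and then regroup the resulting factors in consecutive pairs. First, by Theorem~\ref{Sondow-the1} the series in
\begin{equation*}
\eta(s)=\frac12+\frac12\sum_{n=1}^{\infty}(-1)^{n-1}\bigl(n^{-s}-(n+1)^{-s}\bigr)
\end{equation*}
converges uniformly on compact subsets of $\{\,\mathrm{Re}(s)>-1,\ s\neq1\,\}$ to a holomorphic function; hence, by the standard theorem on term-by-term differentiation of a locally uniformly convergent series of holomorphic functions together with $\tfrac{d}{ds}n^{-s}=-n^{-s}\log n$, I obtain
\begin{equation*}
2\eta'(s)=\sum_{n=1}^{\infty}(-1)^{n-1}\bigl((n+1)^{-s}\log(n+1)-n^{-s}\log n\bigr).
\end{equation*}

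Next, since the partial sums of a convergent series of complex numbers exponentiate to the partial products of the termwise exponentials, and since $m^{m^{-s}}=\sqrt[m^{s}]{m}$ (in particular $1^{1^{-s}}=1=\sqrt[1^{s}]{1}$, consistent with the stated convention, so that at $s=0$ the identity collapses to Wallis' product \eqref{Wallis}), one gets
\begin{equation*}
e^{2\eta'(s)}=\prod_{n=1}^{\infty}\left(\frac{(n+1)^{(n+1)^{-s}}}{n^{n^{-s}}}\right)^{(-1)^{n-1}}
=\frac{\sqrt[2^{s}]{2}}{\sqrt[1^{s}]{1}}\cdot\frac{\sqrt[2^{s}]{2}}{\sqrt[3^{s}]{3}}\cdot\frac{\sqrt[4^{s}]{4}}{\sqrt[3^{s}]{3}}\cdot\frac{\sqrt[4^{s}]{4}}{\sqrt[5^{s}]{5}}\cdots .
\end{equation*}
Collecting the factors in consecutive pairs $(2k-1,2k)$ for $k=1,2,\dots$ turns the right-hand side into exactly the product of \eqref{Theorem1}. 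To license this regrouping I would check that the $n$-th factor $p_n=\bigl((n+1)^{(n+1)^{-s}}/n^{n^{-s}}\bigr)^{(-1)^{n-1}}$ tends to $1$: writing $a(x)=x^{-s}\log x$ one has $\log p_n=\pm\bigl(a(n+1)-a(n)\bigr)$, and the mean value inequality together with $a'(x)=x^{-s-1}(1-s\log x)\to0$ for $\mathrm{Re}(s)>-1$ gives $a(n+1)-a(n)\to0$; when the factors of an infinite product tend to $1$, grouping consecutive factors changes neither convergence nor value.

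Finally I would pin down the admissible $s$. For $\mathrm{Re}(s)>0$ the series for $2\eta'(s)$ converges absolutely, so every step above is immediate. In the strip $-1<\mathrm{Re}(s)\le0$ the series is only conditionally convergent, but it still converges — for instance by Dirichlet's test, using that $\sum_{n}\bigl|a(n+2)-2a(n+1)+a(n)\bigr|<\infty$ there since $|a''(x)|$ decays faster than $x^{-1}$ — and the partial-sum/partial-product passage and the regrouping argument go through unchanged; thus \eqref{Theorem1} holds on all of $\{\,\mathrm{Re}(s)>-1,\ s\neq1\,\}$, which I read as the ``appropriate values of $s$''. The main obstacle is precisely this convergence bookkeeping in the strip $-1<\mathrm{Re}(s)\le0$, since the algebraic identity itself drops straight out of differentiating \eqref{Sondow2}, and the verification that the infinite product may be regrouped is routine once $p_n\to1$ is in hand.
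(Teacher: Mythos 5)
Your proof is correct. The paper itself gives no proof of this statement (it is quoted from Huylebrouck), but your method --- termwise differentiation of Sondow's transformed series \eqref{Sondow2} followed by exponentiation and pairing of consecutive factors --- is precisely the method the paper uses for its own generalizations: it is the $c=2$ instance of the proof of Theorem~\ref{re-1}, which differentiates the $k=1$ transformed series \eqref{k1} and exponentiates, whereas Theorem~\ref{main} covers only $\mathrm{Re}(s)>0$ by differentiating the untransformed Dirichlet series via Jensen--Cahen. Two minor observations: the locally uniform convergence to a holomorphic function asserted in Theorem~\ref{Sondow-the1} already licenses termwise differentiation by Weierstrass' theorem, so your separate Dirichlet-test bookkeeping in the strip $-1<\mathrm{Re}(s)\le 0$ is redundant; and grouping consecutive factors of a convergent infinite product is just passing to a subsequence of the partial products, so the verification that $p_n\to1$ is likewise unnecessary in the direction you actually use it.
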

The $s=0$ case of the above result  recovers Wallis'  formula,
the $s=1$ case implies a first generalization of Wallis' formula:
\begin{equation}\label{first}
2^{(2\gamma-\log 2)}=\frac{2^{\frac{1}{2}}\cdot 2^{\frac{1}{2}}}{1^{\frac{1}{1}}\cdot 3^{\frac{1}{3}}}\frac{4^{\frac{1}{4}}\cdot 4^{\frac{1}{4}}}{3^{\frac{1}{3}}\cdot 5^{\frac{1}{5}}}\frac{6^{\frac{1}{6}}\cdot 6^{\frac{1}{6}}}{5^{\frac{1}{5}}\cdot 7^{\frac{1}{7}}}\cdots,\end{equation}
where $\gamma=0.5772156649\cdots$ is the Euler-Mascheroni constant,
and the  $s=2$  case implies the second generalization of Wallis' formula:
\begin{equation}\label{second}
\left(\frac{4\pi e^{\gamma}}{A^{12}}\right)^{\frac{\pi^2}{6}}=\frac{2^{\frac{1}{2^2}}\cdot 2^{\frac{1}{2^2}}}{1^{\frac{1}{1^2}}\cdot 3^{\frac{1}{3^2}}}\frac{4^{\frac{1}{4^2}}\cdot 4^{\frac{1}{4^2}}}{3^{\frac{1}{3^2}}\cdot 5^{\frac{1}{5^2}}}\frac{6^{\frac{1}{6^2}}\cdot 6^{\frac{1}{6^2}}}{5^{\frac{1}{5^2}}\cdot 7^{\frac{1}{7^2}}}\cdots,\end{equation} 
where $A=1.2824271291\cdots$ is the Glaisher-Kinkelin constant (see \cite{Weisstein} for the definition).

In this note, we go to a more general case.
Let $c\geq 2$ be an integer, for Re$(s)>0,$ if denote by
\begin{equation}\label{A}
	\zeta_{(c)}(s)=(1-c^{1-s})\zeta(s),
\end{equation}
then  we have 
\begin{equation}\label{B}
	\zeta_{(c)}(s)=\sum_{n=1}^{\infty}\frac{a_{c,n}}{n^s},
\end{equation}
where 
\begin{equation}\label{C}
	a_{c,n}=\begin{cases}
		1-c&\textrm{if}~n\equiv 0~(\textrm{mod}~c);\\
		1&\textrm{if}~n\not\equiv 0~(\textrm{mod}~c)\end{cases}
\end{equation}
(see \cite[p. 326]{KW}).

Inspiring by the representation of $\zeta_{(c)}(s)$ (see (\ref{B})), we extend Euler's transformation formula from the alternating series
to more general series with the form $A_{c}=\sum_{n=1}^{\infty}a_{c,n}b_n$, where $a_{c,n}$ is given in (\ref{C}). (See Theorem \ref{Euler1}).
Then we generalize the above Sondow's result to give expressions for $\zeta_{(c)}(s)$ by the generalized difference operator $\Delta_{c}$.    
It provides analytic continuation of the Riemann zeta function $\zeta(s)$ and new ways to evaluate $\zeta(-m)$ for $m=0,1,2,\ldots$ (see Theorem \ref{gsondow} and Corollary \ref{corollaryc}).
Based on these results, with the help of
$\zeta_{(c)}(s)$ we further extend Huylebrouck's generalization of Wallis' formula in the half planes Re$(s)>0$ and Re$(s)>-1$, respectively (see Theorems \ref{main} and \ref{re-1}), which imply several interesting special cases including
$$
\frac{2\pi}{3^{\frac{3}{2}}}=\frac{3^{\frac{4}{3}}}{2^{\frac{4}{3}}}
\frac{2^{\frac{1}{3}}\cdot3^{\frac{1}{3}}\cdot3^{\frac{1}{3}}\cdot4^{\frac{1}{3}}\cdot6^{\frac{2}{3}}\cdot6^{\frac{2}{3}}}{4^{\frac{1}{3}}\cdot4^{\frac{1}{3}}\cdot5^{\frac{1}{3}}\cdot5^{\frac{1}{3}}\cdot4^{\frac{2}{3}}\cdot5^{\frac{2}{3}}}\cdots,
$$
$$
 3^{\gamma-\frac{\log 3}{2}}=\frac{3^{\frac{1}{3}}\cdot3^{\frac{1}{3}}}{2^{\frac{1}{2}}\cdot4^{\frac{1}{4}}} \frac{6^{\frac{1}{6}}\cdot6^{\frac{1}{6}}}{5^{\frac{1}{5}}\cdot7^{\frac{1}{7}}}\frac{9^{\frac{1}{9}}\cdot9^{\frac{1}{9}}}{8^{\frac{1}{8}}\cdot10^{\frac{1}{10}}}\cdots,
$$
and
$$
\left(3\left(\frac{2\pi e^{\gamma}}{A^{12}}\right)^{2}\right)^{\frac{\pi^2}{18}}=\frac{3^{\frac{1}{3^2}}\cdot3^{\frac{1}{3^2}}}{2^{\frac{1}{2^2}}\cdot4^{\frac{1}{4^2}}} \frac{6^{\frac{1}{6^2}}\cdot6^{\frac{1}{6^2}}}{5^{\frac{1}{5^2}}\cdot7^{\frac{1}{7^2}}}\frac{9^{\frac{1}{9^2}}\cdot9^{\frac{1}{9^2}}}{8^{\frac{1}{8^2}}\cdot10^{\frac{1}{10^2}}}\cdots,$$
where $\gamma$ is the Euler-Mascheroni constant and $A$ is the Glaisher-Kinkelin constant (see the last section).

\section{Euler's transformation and zeta functions}
In this section, we shall generalize Euler's transformation from the alternating series to  a more general setting. Then applying  this, we give new expressions for $\zeta_{(c)}(s),$ which  
provide analytic continuation of the Riemann zeta function $\zeta(s)$ and new ways to evaluate $\zeta(-m)$ for $m=0,1,2,\ldots$.

Let 
\begin{equation}\label{gseries} 
A_c=\sum_{n=1}^{\infty}a_{c,n}b_n
\end{equation} 
be a complex series, where $a_{c,n}$ is defined in (\ref{C}).

For $c=2,$ the series (\ref{gseries}) is just the alternating series which can also be written in  the following form
\begin{align*}
	A_2=&b_1-b_2+b_3-b_4+b_5-b_6+\cdots\\
	=&\frac{1}{2}(2b_1-b_2)-\frac{1}{2}[(b_2-b_3)-(b_3-b_4)+(b_4-b_5)-(b_5-b_6)+\cdots]\\
	=&\frac{1}{2}(2b_1-b_2)-\frac{1}{4}(2b_2-3b_3+b_4)+\frac{1}{4}[(b_3-2b_4+b_5)-(b_3-2b_4+b_5)+\cdots].
\end{align*}
In general, for $k\geq1$ we have
\begin{equation}
	\sum_{n=1}^{\infty}a_{n,2}b_n=\sum_{j=0}^{k-1}\frac{(-1)^j}{2^{j+1}}\left(2\Delta_2^jb_1-\Delta_2^jb_2\right)+\sum_{n=1}^{\infty}\frac{(-1)^k}{2^k}a_{n,2}\Delta_2^kb_n,
\end{equation}
where $\Delta_2^0b_n=b_n$, $\Delta_2^kb_n=\Delta_2^{k-1}b_{n+1}-\Delta_2^{k-1}b_{n+2}$ for $k\geq1$. 
It needs to mention that here $\Delta_{2}^{1}b_{n}=b_{n+1}-b_{n+2}$, which is slightly different from the difference operator in Euler's transformation. In that case,
we have $\Delta^{1}b_{n}=b_{n}-b_{n+1}$ (see (\ref{d2})).

For $c=3$, the series  (\ref{gseries}) becomes to
\begin{align*}
	A_3=&b_1+b_2-2b_3+b_4+b_5-2b_6+\cdots\\
	=&\frac{1}{3}(3b_1+4b_2-4b_3)-\frac{1}{3}[(b_2+b_3-2b_4)+(b_3+b_4-2b_5)-2(b_4+b_5-2b_6)+\cdots]\\
	=&\frac{1}{3}(3b_1+4b_2-4b_3)-\frac{1}{9}(3b_2+7b_3-6b_4-12b_5+8b_6)+\frac{1}{9}[(b_3+2b_4-3b_5\\&-4b_6+4b_7)+(b_4+2b_5-3b_6-4b_7+4b_8)-2(b_5+2b_6-3b_7-4b_8+4b_9)+\cdots].
\end{align*}
In general, for $k\geq1$ we have
\begin{equation}
	\sum_{n=1}^{\infty}a_{n,3}b_n=\sum_{j=0}^{k-1}\frac{(-1)^j}{3^{j+1}}\left(3\Delta_3^jb_1+4\Delta_3^jb_2-4\Delta_3^jb_3\right)+\sum_{n=1}^{\infty}\frac{(-1)^k}{3^k}a_{n,3}\Delta_3^kb_n,
\end{equation}
where $\Delta_3^0b_n=b_n$, $\Delta_3^kb_n=\Delta_3^{k-1}b_{n+1}+\Delta_3^{k-1}b_{n+2}-2\Delta_3^{k-1}b_{n+3}$. 

For $c=4$, the series  (\ref{gseries}) becomes to
\begin{align*}
	A_4=&b_1+b_2+b_3-3b_4+b_5+b_6+b_7-3b_8\cdots\\
	=&\frac{1}{4}(4b_1+5b_2+6b_3-9b_4)-\frac{1}{4}[(b_2+b_3+b_4-3b_5)+(b_3+b_4+b_5-3b_6)\\&+(b_4+b_5+b_6-3b_7)-3(b_5+b_6+b_7-3b_8)+\cdots].
\end{align*}
In general, for $k\geq1$ we have
\begin{equation}
	\sum_{n=1}^{\infty}a_{n,4}b_n=\sum_{j=0}^{k-1}\frac{(-1)^j}{4^{j+1}}\left(4\Delta_4^jb_1+5\Delta_4^jb_2+6\Delta_4^jb_3-9\Delta_4^jb_4\right)+\sum_{n=1}^{\infty}\frac{(-1)^k}{4^k}a_{n,4}\Delta_4^kb_n,
\end{equation}
where $\Delta_4^0b_n=b_n$, $\Delta_4^kb_n=\Delta_4^{k-1}b_{n+1}+\Delta_4^{k-1}b_{n+2}+\Delta_4^{k-1}b_{n+3}-3\Delta_4^{k-1}b_{n+4}$. 

In general, for arbitrary $c$, we have the following result, in which, a generalized difference operator $\Delta_{c}$ is introduced.
\begin{theorem}\label{Euler1}
	For $c\geq2$ and $k\geq1$, we have
	\begin{equation}\label{anc}
		\begin{aligned}
			A_c
			&=\sum_{j=0}^{k-1}\frac{(-1)^j}{c^{j+1}}\left(\sum_{i=0}^{c-2}(c+i)\Delta_c^jb_{i+1}-(c-1)^2\Delta_c^jb_c\right)\\
			&\quad+\sum_{n=1}^{\infty}\frac{(-1)^k}{c^k}a_{c,n}\Delta_c^kb_n,
		\end{aligned}
	\end{equation}
	where the sequence of generalized difference operators $\{\Delta_{c}^{k}\}_{k=1}^{\infty}$ is defined recursively by $\Delta_c^0b_n=b_n$ and 
	\begin{equation}\label{gc} 
		\Delta_c^kb_n=\sum_{i=1}^{c}a_{c,i}\Delta_c^{k-1}b_{n+i}
	\end{equation} for $k\geq1$. Furthermore, if the series (\ref{gseries}) is convergent, so does the right hand side of (\ref{anc}).
\end{theorem}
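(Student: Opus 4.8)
The plan is to isolate a single ``one‑step'' identity and then obtain \eqref{anc} by iterating it. Concretely, I would first prove that for an \emph{arbitrary} complex sequence $(\beta_n)_{n\ge1}$ for which $\sum_{n\ge1}a_{c,n}\beta_n$ converges, the series $\sum_{n\ge1}a_{c,n}\Delta_c\beta_n$ also converges and
\[
\sum_{n=1}^{\infty}a_{c,n}\Delta_c\beta_n=\sum_{i=0}^{c-2}(c+i)\beta_{i+1}-(c-1)^2\beta_c-c\sum_{n=1}^{\infty}a_{c,n}\beta_n,
\]
which is exactly the $k=1$ instance of \eqref{anc} (with $b$ replaced by $\beta$), rearranged; here $\Delta_c$ acts on $(\beta_n)$ via \eqref{gc}, i.e.\ $\Delta_c\beta_n=\sum_{i=1}^{c}a_{c,i}\beta_{n+i}$.

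To establish this one‑step identity I would argue with partial sums. Inserting $\Delta_c\beta_n=\sum_{i=1}^{c}a_{c,i}\beta_{n+i}$ and reindexing the finite double sum by $m=n+i$, one gets, for $N\ge c$,
\[
\sum_{n=1}^{N}a_{c,n}\Delta_c\beta_n=\sum_{m=2}^{N+c}\beta_m\sum_{n=\max(1,\,m-c)}^{\min(N,\,m-1)}a_{c,n}\,a_{c,m-n}.
\]
The key computation is that the full ``convolution'' $\sum_{j=1}^{c}a_{c,j}\,a_{c,m-j}$ equals $-c\,a_{c,m}$; this is a short case check, using only that any $c$ consecutive integers form a complete residue system modulo $c$, so that exactly one of the $j$'s and exactly one of the $m-j$'s is a multiple of $c$, and distinguishing $c\mid m$ from $c\nmid m$. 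Consequently, for $m$ in the ``bulk'' range $c+1\le m\le N+1$ the coefficient of $\beta_m$ is precisely $-c\,a_{c,m}$; rewriting $\sum_{m=c+1}^{N+1}(-c\,a_{c,m})\beta_m=-c\sum_{m=1}^{N+1}a_{c,m}\beta_m+c\sum_{m=1}^{c}a_{c,m}\beta_m$ and using $a_{c,m}=1$ for $m<c$ and $a_{c,c}=1-c$ (from \eqref{C}), the finitely many coefficients of $\beta_1,\dots,\beta_c$ collapse to exactly $\sum_{i=0}^{c-2}(c+i)\beta_{i+1}-(c-1)^2\beta_c$; and the only remaining contribution is a sum over $m=N+2,\dots,N+c$ whose coefficients are bounded independently of $N$ (each is a sum of at most $c-1$ products $a_{c,n}a_{c,m-n}$, hence of modulus at most $(c-1)^3$). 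Finally, convergence of $\sum a_{c,n}\beta_n$ forces $a_{c,n}\beta_n\to 0$, hence $\beta_n\to 0$ since $|a_{c,n}|\ge 1$; so this boundary term tends to $0$ and, letting $N\to\infty$, the one‑step identity follows, together with the convergence of $\sum a_{c,n}\Delta_c\beta_n$.

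With the one‑step identity in hand, I would prove \eqref{anc} by induction on $k$. The case $k=1$ is the one‑step identity applied to $\beta_n=b_n$, and the hypothesis that \eqref{gseries} converges supplies exactly the convergence needed to start. Assuming \eqref{anc} for some $k\ge1$ — which in particular gives convergence of $\sum a_{c,n}\Delta_c^k b_n$ — I would apply the one‑step identity to the sequence $\beta_n=\Delta_c^k b_n$ (legitimate by that convergence, and noting $\Delta_c^{k+1}b_n=\Delta_c(\Delta_c^k b)_n$), multiply through by $(-1)^k/c^k$, and substitute the resulting expression for $\dfrac{(-1)^k}{c^k}\sum_{n\ge1}a_{c,n}\Delta_c^k b_n$ into \eqref{anc}; the new finite term produced is precisely the $j=k$ summand and the new tail is precisely the $(k+1)$‑tail, so \eqref{anc} holds for $k+1$ and $\sum a_{c,n}\Delta_c^{k+1}b_n$ converges. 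The final assertion of the theorem is built into this induction.

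I expect the main obstacle to be purely the bookkeeping in the partial‑sum computation: pinning down exactly the range of $m$ on which the clean formula $-c\,a_{c,m}$ holds, verifying that the leftover ``head'' reassembles into the stated combination $\sum_{i=0}^{c-2}(c+i)\beta_{i+1}-(c-1)^2\beta_c$ (including the degenerate case $c=2$, where the $i$‑sum has a single term, and small values of $N$), and confirming that the $\beta_{N+2},\dots,\beta_{N+c}$ remainder genuinely has $N$‑independent bounded coefficients so that it disappears in the limit. The underlying ``convolution equals $-c\,a_{c,m}$'' identity, though elementary, is the real engine and is worth stating separately.
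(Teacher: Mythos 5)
Your proposal is correct and follows essentially the same route as the paper: establish the $k=1$ (one-step) identity by a partial-sum computation that shows the discrepancy between the two sides reduces to finitely many boundary terms $\beta_{N+2},\dots,\beta_{N+c}$ vanishing as $N\to\infty$, then iterate by induction on $k$, applying the one-step identity to $\beta_n=\Delta_c^k b_n$. The only cosmetic difference is that you package the paper's explicit term-by-term regrouping into the convolution identity $\sum_{j=1}^{c}a_{c,j}a_{c,m-j}=-c\,a_{c,m}$, which is a correct and slightly tidier way to organize the same bookkeeping.
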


\begin{remark}
	In some cases, although (\ref{gseries}) is divergent, the series on the right hand side of  (\ref{anc}) may still converge, so we can endow its sum to  (\ref{gseries}) as a generalized sum.
	This leads to a possible way for the analytic continuation of zeta functions (see Theorem \ref{gsondow}).
\end{remark}

\begin{proof}[Proof of Theorem \ref{Euler1}.]
	We prove it from the induction on $k$. By manipulating the series (\ref{gseries}), we formally get
	\begin{equation}\label{add1}
		\begin{aligned}
			A_c
			=&b_1+\cdots+b_{c-1}-(c-1)b_c+b_{c+1}+\cdots+b_{2c-1}-(c-1)b_{2c}+\cdots\\
			=&\frac{1}{c}\left(cb_1+(c+1)b_2+\cdots+(2c-2)b_{c-1}-(c-1)^2b_c\right)
			\\&-\frac{1}{c}[(b_2+b_3+\cdots+b_{c}-(c-1)b_{c+1})+(b_3+b_4+\cdots+b_{c+1}-(c-1)b_{c+2})\\
			&\quad\quad+\cdots+(b_c+b_{c+1}+\cdots+b_{2c-2}-(c-1)b_{2c-1})\\
			&\quad\quad-(c-1)(b_{c+1}+b_{c+2}+\cdots+b_{2c-1}-(c-1)b_{2c})+\cdots]\\
			=&\frac{1}{c}\left(\sum_{i=0}^{c-2}(c+i)b_{i+1}-(c-1)^2b_c\right)-\frac{1}{c}\sum_{n=1}^{\infty}a_{c,n}\sum_{i=1}^{c}a_{c,i}b_{n+i}\\
			=&\frac{1}{c}\left(\sum_{i=0}^{c-2}(c+i)\Delta_c^0b_{i+1}-(c-1)^2\Delta_c^0b_c\right)-\frac{1}{c}\sum_{n=1}^{\infty}a_{c,n}\Delta_c^1b_n,
		\end{aligned}
	\end{equation}
	which is (\ref{anc}) in the case of $k=1$.	
	We assert that if the series $\sum_{n=1}^{\infty}a_{c,n}b_n$ is convergent, then the right hand side of  (\ref{add1}) is also convergent and (\ref{add1})  is established. 
	Indeed, let $$S_{c,n}=\frac{1}{c}\left(\sum_{i=0}^{c-2}(c+i)\Delta_c^0b_{i+1}-(c-1)^2\Delta_c^0b_c\right)-\frac{1}{c}\sum_{k=1}^{n}a_{c,k}\Delta_c^1b_k$$
	be the partial sum of the above series. By writing $n=cm+d$ with $m,d\in\mathbb{Z}$, $m\geqslant1$, $0\leqslant d< c$, we have
	\begin{equation}\label{add2}
		\begin{aligned}
			\sum_{k=1}^{n+1}a_{c,k}b_k-S_{c,n}
			=&\sum_{k=1}^{cm+1}a_{c,k}b_k-S_{c,cm}+\sum_{k=cm+2}^{cm+d+1}a_{c,k}b_k-\frac{1}{c}\sum_{k=cm+1}^{cm+d}a_{c,k}\Delta_c^1b_k\\
			=&\frac{1}{c}\left(-\sum_{i=2}^{c-1}(-c-i+1)b_{cm+i}+(c-1)^2b_{c(m+1)}\right)\\
			&+\sum_{k=cm+2}^{cm+d+1}a_{c,k}b_k-\frac{1}{c}\sum_{k=cm+1}^{cm+d}a_{c,k}\sum_{i=1}^{c}a_{c,i}b_{k+i},
		\end{aligned}
	\end{equation}
	which is a $\mathbb{Z}$-linear combination of  finite many terms $b_k$ $(cm+2\leqslant k\leqslant cm+c+d)$. Thus if the series $\sum_{n=1}^{\infty}a_{c,n}b_n$ converges, then $b_k\rightarrow 0$ as $k\rightarrow\infty$ and by (\ref{add2})
	$$\lim_{n\to\infty}\left(\sum_{k=1}^{n+1}a_{c,k}b_k-S_{c,n}\right)=0,$$
	which is equivalent to
	\begin{equation}
		\begin{aligned}
			A_c&=\sum_{n=1}^{\infty}a_{c,n}b_n\\
			&=\frac{1}{c}\left(\sum_{i=0}^{c-2}(c+i)\Delta_c^0b_{i+1}-(c-1)^2\Delta_c^0b_c\right)-\frac{1}{c}\sum_{n=1}^{\infty}a_{c,n}\Delta_c^1b_n.
		\end{aligned}
	\end{equation}
	
	If we assume the theorem is true for $k$, then for $k+1$ we have
	\begin{align*}
		A_c
		=&\sum_{j=0}^{k-1}\frac{(-1)^j}{c^{j+1}}\left(\sum_{i=0}^{c-2}(c+i)\Delta_c^jb_{i+1}-(c-1)^2\Delta_c^jb_c\right)+\sum_{n=1}^{\infty}\frac{(-1)^k}{c^k}a_{c,n}\Delta_c^kb_n\\
		=&\sum_{j=0}^{k-1}\frac{(-1)^j}{c^{j+1}}\left(\sum_{i=0}^{c-2}(c+i)\Delta_c^jb_{i+1}-(c-1)^2\Delta_c^jb_c\right)\\
		&+\frac{(-1)^k}{c^k}\left(\frac{1}{c}\left(\sum_{i=0}^{c-2}(c+i)\Delta_c^kb_{i+1}-(c-1)^2\Delta_c^kb_{c}\right)\right.\\&\quad\quad\quad\quad\quad\left.-\frac{1}{c}\sum_{n=1}^{\infty}a_{c,n}\sum_{i=1}^{c}a_{i,c}\Delta_c^kb_{n+i}\right)\\
		=&\sum_{j=0}^{k-1}\frac{(-1)^j}{c^{j+1}}\left(\sum_{i=0}^{c-2}(c+i)\Delta_c^jb_{i+1}-(c-1)^2\Delta_c^jb_c\right)\\
		&+\frac{(-1)^k}{c^{k+1}}\left(\sum_{i=0}^{c-2}(c+i)\Delta_c^kb_{i+1}-(c-1)^2\Delta_c^kb_{c}\right)+\frac{(-1)^{k+1}}{c^{k+1}}\sum_{n=1}^{\infty}a_{c,n}\Delta_c^{k+1}b_n\\
		=&\sum_{j=0}^{k}\frac{(-1)^j}{c^{j+1}}\left(\sum_{i=0}^{c-2}(c+i)\Delta_c^jb_{i+1}-(c-1)^2\Delta_c^jb_c\right)+\frac{(-1)^{k+1}}{c^{k+1}}\sum_{n=1}^{\infty}a_{c,n}\Delta_c^{k+1}b_n
	\end{align*}
	and the convergence is ensured from $k$ to $k+1$ by the same procedure as above.
	This completes our proof.
\end{proof}

From Theorem \ref{Euler1}, we get the following generalization of Sondow's result.
\begin{theorem}\label{gsondow}
	For $c\geq2$ and $k \geq 1$ the product
	\begin{equation}\label{zcs}
		\begin{aligned}
			\zeta_{(c)}(s)&=(1-c^{1-s})\zeta(s)\\
			&=\sum_{j=0}^{k-1}\frac{(-1)^j}{c^{j+1}}\left(\sum_{i=0}^{c-2}(c+i)\Delta_c^j(i+1)^{-s}-(c-1)^2\Delta_c^jc^{-s}\right)\\ &\quad+\lim_{N\rightarrow\infty}\sum_{n=1}^{cN}\frac{(-1)^k}{c^k}a_{c,n}\Delta_c^kn^{-s}
		\end{aligned}
	\end{equation}
	provides the analytic continuation of $\zeta(s)$ on the punctured half plane Re$(s)>1-k$, $s \neq 1$ where the infinite series converges absolutely and uniformly on compact sets to a holomorphic function. Moreover, except that the convergence will not be absolute in the strip $-k<\text{Re}(s) \leq 1-k$, this remains true for $k \geq 0$ and $\text{Re}(s)>-k$, $s\neq1$. Especially, taking $k=1$, we have
	\begin{equation}\label{k1}
		\begin{aligned}
			\zeta_{(c)}(s)&=(1-c^{1-s})\zeta(s)\\&=\frac{1}{c}\left(\sum_{i=0}^{c-2}(c+i)(i+1)^{-s}-(c-1)^2c^{-s}
			-\lim_{N\rightarrow\infty}\sum_{n=1}^{cN}a_{c,n}\sum_{i=1}^{c}a_{i,c}(n+i)^{-s}\right)
		\end{aligned}
	\end{equation}
	for Re$(s)>-1$, $s \neq 1$.
\end{theorem}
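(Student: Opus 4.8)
The plan is to follow Sondow's argument for Theorem~\ref{Sondow-the1}, with the ordinary difference operator replaced by the generalized one $\Delta_c$ and with Theorem~\ref{Euler1} supplying the exact shape of the remainder term. First I would apply Theorem~\ref{Euler1} with $b_n=n^{-s}$. By \eqref{B}, which holds for $\text{Re}(s)>0$, the series $\sum_{n\ge1}a_{c,n}n^{-s}=\zeta_{(c)}(s)$ converges there, so Theorem~\ref{Euler1} applies verbatim and its identity \eqref{anc} becomes precisely \eqref{zcs} for $\text{Re}(s)>0$, $s\ne1$, and for every $k\ge0$ (for $k=0$ this is \eqref{B} itself; for such $s$ the grouped limit $\lim_{N\to\infty}\sum_{n=1}^{cN}$ coincides with $\sum_{n=1}^{\infty}$). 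Taking $k=1$ gives \eqref{k1}. What remains is to show that the right-hand side of \eqref{zcs} is holomorphic on the larger region $\text{Re}(s)>1-k$ (for $k\ge1$), and, when the remainder is kept grouped in blocks of $c$ indices, on $\text{Re}(s)>-k$, and then to conclude by analytic continuation.

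The crux is a decay estimate for $\Delta_c^kn^{-s}$. Writing $E$ for the shift $Eb_n=b_{n+1}$, the recursion \eqref{gc} reads $\Delta_c=P_c(E)$ with $P_c(x)=\sum_{i=1}^{c}a_{c,i}x^i=\sum_{i=1}^{c-1}x^i-(c-1)x^c$; since $P_c(1)=(c-1)-(c-1)=0$, we may write $P_c(x)=(1-x)R_c(x)$, whence $\Delta_c^k=P_c(E)^k=(-1)^k(E-1)^kR_c(E)^k$, so $\Delta_c^k$ annihilates $1,n,\dots,n^{k-1}$. By the standard Peano-kernel estimate for a difference operator that kills polynomials of degree $<k$, this gives, for $f(x)=x^{-s}$,
\[
\Delta_c^kn^{-s}=O_{c,k}\!\left(\sup_{n\le x\le n+ck}\bigl|f^{(k)}(x)\bigr|\right)=O_{c,k}\!\left(\bigl|s(s+1)\cdots(s+k-1)\bigr|\,n^{-\text{Re}(s)-k}\right)
\]
locally uniformly in $s$; applying the same to $x\mapsto x^{-s-1}$ shows the forward difference in $n$ of $\Delta_c^kn^{-s}$ is $O_{c,k}\bigl(|s|\cdots|s+k|\,n^{-\text{Re}(s)-k-1}\bigr)$.

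From the first bound, $\sum_{n}a_{c,n}\Delta_c^kn^{-s}$ converges absolutely and locally uniformly as soon as $\text{Re}(s)+k>1$, so the right side of \eqref{zcs} --- a finite $\mathbb{Z}$-linear combination of the entire functions $\Delta_c^jm^{-s}$ together with this series --- is holomorphic on $\text{Re}(s)>1-k$. For $-k<\text{Re}(s)\le1-k$ absolute convergence fails, but grouping the remainder as $B_m=\sum_{n=cm+1}^{c(m+1)}a_{c,n}\Delta_c^kn^{-s}$ (which is exactly what $\lim_{N\to\infty}\sum_{n=1}^{cN}$ records) and using $\sum_{n=cm+1}^{c(m+1)}a_{c,n}=0$ to subtract the common value $\Delta_c^k(cm+1)^{-s}$ from each summand, the sharper estimate yields $|B_m|=O_{c,k}(m^{-\text{Re}(s)-k-1})$, so $\sum_mB_m$ converges absolutely and locally uniformly for $\text{Re}(s)>-k$, agreeing with the ungrouped sum on the overlap $\text{Re}(s)>1-k$. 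Hence in every case the right side of \eqref{zcs} is holomorphic on the claimed region and, as shown in the reduction above, equals $\zeta_{(c)}(s)=(1-c^{1-s})\zeta(s)$ on $\text{Re}(s)>0$; since that region is connected, the identity theorem forces the equality throughout it, and dividing by $1-c^{1-s}$ exhibits the analytic continuation of $\zeta(s)$ (the remaining zeros of $1-c^{1-s}$, all on the line $\text{Re}(s)=1$, cause no difficulty since $\zeta$ is classically known to be regular there).

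I expect the main obstacle to be the decay estimate: recognizing that $P_c(1)=0$, hence that $\Delta_c^k$ annihilates polynomials of degree below $k$, and converting this into the uniform bounds $\Delta_c^kn^{-s}=O(n^{-\text{Re}(s)-k})$ and $|B_m|=O(m^{-\text{Re}(s)-k-1})$. Once these are available the rest is the routine Dirichlet-test/identity-theorem packaging; the one genuinely delicate point is that in the strip $-k<\text{Re}(s)\le1-k$ the remainder must be summed in blocks of $c$ consecutive indices, which is exactly why \eqref{zcs} is written with $\lim_{N\to\infty}\sum_{n=1}^{cN}$ rather than $\sum_{n=1}^{\infty}$.
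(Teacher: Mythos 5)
Your proposal is correct and follows essentially the same route as the paper: apply Theorem \ref{Euler1} with $b_n=n^{-s}$, prove the decay bound $\Delta_c^k n^{-s}=O\bigl(\abs{(s)_k}\,n^{-\mathrm{Re}(s)-k}\bigr)$ from the fact that $\Delta_c$ annihilates constants, show that each block of $c$ consecutive terms of the remainder gains one extra order of decay, and finish with the identity theorem. The only differences are cosmetic: the paper derives the decay bound from an explicit iterated-integral representation $\Delta_c^k n^{-s}=(s)_kJ_k$ rather than your symbol-factorization/Peano-kernel argument, and it bounds the blocks by observing that each block equals $\Delta_c^{k+1}$ evaluated at a shifted point rather than by subtracting a common value using $\sum_{i=1}^{c}a_{c,i}=0$.
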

\begin{proof}
By setting  $b_n=n^{-s}$ in (\ref{anc}), for $\sigma=\text{Re}(s)>1$, we get
\begin{equation}\label{zc}
	\begin{aligned}
			\zeta_{(c)}(s)&=(1-c^{1-s})\zeta(s)\\&=\sum_{j=0}^{k-1}\frac{(-1)^j}{c^{j+1}}\left(\sum_{i=0}^{c-2}(c+i)\Delta_c^j(i+1)^{-s}-(c-1)^2\Delta_c^jc^{-s}\right)\\&\quad+\sum_{n=1}^{\infty}\frac{(-1)^k}{c^k}a_{c,n}\Delta_c^kn^{-s},
	\end{aligned}
\end{equation}
where $\Delta_c^0n^{-s}=n^{-s}$ and by (\ref{gc})  \begin{equation}\label{gc2}\Delta_c^kn^{-s}=\sum_{i=1}^{c}a_{c,i}\Delta_c^{k-1}(n+i)^{-s}\end{equation} for $k\geq 1$.

In the following, we investigate the convergent area in $\mathbb{C}$ for the series on the right hand side of (\ref{zc}).
 Let $(s)_0=1$ and $$(s)_k=s(s+1)\cdots(s+k-1)$$ for $k\geq 1$.
It can be checked directly  for $k\geq 1$ that
\begin{equation}\label{deltac} \Delta_{c}^{k}n^{-s}=(s)_kJ_k,\end{equation} where
	\begin{equation}\label{6+}J_k=\sum_{i_1,\ldots,i_k=1}^{c-1}\int_{i_k}^{c}\cdots\int_{i_1}^c(n+x_1+\cdots+x_k)^{-s-k}dx_1\cdots dx_k,
	\end{equation}
so we have the estimation 
	\begin{equation}\label{6}
		\abs{\Delta_c^kn^{-s}}\leq\frac{\abs{(s)_k}\left(\frac{c(c-1)}{2}\right)^k}{n^{\sigma+k}},
	\end{equation}
where $\sigma+k\geq0$ and $k=0,1,2,\ldots.$
%
By (\ref{gc2}) we have
	\begin{equation}
	\begin{aligned}
		&\sum_{n=c+1}^{cN}a_{c,n}\Delta_{c}^{k}n^{-s}\\
		=&\sum_{m=2}^{N}\left(\Delta_{c}^{k}(cm-c+1)^{-s}+\cdots+\Delta_{c}^{k}(cm-1)^{-s}-(c-1)\Delta_{c}^{k}(cm)^{-s}\right)\\
		=&\sum_{m=2}^{N}\sum_{i=1}^{c}a_{c,i}\Delta_{c}^{k}(cm-c+i)^{-s}\\
				=&\sum_{m=2}^{N}\Delta_{c}^{k+1}(cm-c)^{-s}.
	\end{aligned}
	\end{equation}
	Thus from the estimation (\ref{6}), we get
	$$\left|\sum_{n=c+1}^{cN}a_{c,n}\Delta_{c}^{k}n^{-s}\right|=\left|\sum_{m=2}^{N}\Delta_{c}^{k+1}(cm-c)^{-s}\right|\leq\sum_{m=2}^{N}\frac{\abs{(s)_{k+1}}\left(\frac{c(c-1)}{2}\right)^{k+1}}{(cm-c)^{\sigma+k+1}},$$
	so in the half plane $\sigma>-k$, the series\begin{equation}\label{series}\lim_{N\rightarrow\infty}\sum_{n=1}^{cN}a_{c,n}\Delta_{c}^{k}n^{-s}=\sum_{n=1}^{c}a_{c,n}\Delta_{c}^{k}n^{-s}+\lim_{N\rightarrow\infty}\sum_{n=c+1}^{cN}a_{c,n}\Delta_{c}^{k}n^{-s}\end{equation} converges uniformly on any compact set. 
	
	In addition, by the estimation (\ref{6}) and $\abs{a_{c,n}}\leq c-1$ for $n\in\mathbb{N}$, we have 
	\begin{equation}\begin{aligned} \left|\sum_{n=1}^{cN}a_{c,n}\Delta_{c}^{k}n^{-s}\right|&\leq \sum_{n=1}^{cN}\left|a_{c,n}\Delta_{c}^{k}n^{-s}\right|\\
	&\leq (c-1)\sum_{n=1}^{cN}\frac{\abs{(s)_k}\left(\frac{c(c-1)}{2}\right)^k}{n^{\sigma+k}},\end{aligned}
	\end{equation} thus in the half plane $\sigma>1-k,$ the series (\ref{series}) converges absolutely and uniformly on any compact set, so does the right hand side of (\ref{zcs}).
        From these, we conclude that  (\ref{zcs}) provides the analytic continuation of $\zeta(s)$ as described by the theorem.
        \end{proof}
The above theorem implies the following new expressions for the special values $\zeta(-m)$, which generalize Sondow's formula (\ref{s1}).
\begin{corollary}\label{corollaryc} For $m=0,1,2,\ldots$, we have \begin{equation}\label{zcc}
			\begin{aligned} \zeta(-m)&=\left(1-c^{m+1}\right)^{-1}\zeta_{(c)}(-m)\\
			&=\frac{1}{1-c^{m+1}}\sum_{j=0}^{m}\frac{(-1)^j}{c^{j+1}}\left(\sum_{i=0}^{c-2}(c+i)\Delta_c^j(i+1)^{m}-(c-1)^2\Delta_c^jc^{m}\right).
			\end{aligned}
	\end{equation}
	\end{corollary}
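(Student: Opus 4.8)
The plan is to specialize Theorem~\ref{gsondow} at the point $s=-m$, taking the truncation parameter to be $k=m+1$. Since $\mathrm{Re}(-m)=-m>-(m+1)=-k$ and $-m\neq 1$ for every $m\in\{0,1,2,\ldots\}$, the point $s=-m$ lies in the region where (\ref{zcs}) is valid with this choice of $k$. Writing out (\ref{zcs}) there, and noting that $\sum_{j=0}^{k-1}=\sum_{j=0}^{m}$, we get
\begin{equation*}
\zeta_{(c)}(-m)=\sum_{j=0}^{m}\frac{(-1)^j}{c^{j+1}}\left(\sum_{i=0}^{c-2}(c+i)\Delta_c^j(i+1)^{m}-(c-1)^2\Delta_c^jc^{m}\right)+\frac{(-1)^{m+1}}{c^{m+1}}\lim_{N\to\infty}\sum_{n=1}^{cN}a_{c,n}\Delta_c^{m+1}n^{m}.
\end{equation*}
Thus the entire corollary comes down to showing that the last term vanishes.

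To see this I would appeal to identity (\ref{deltac}): $\Delta_c^{k}n^{-s}=(s)_kJ_k$ with $(s)_k=s(s+1)\cdots(s+k-1)$. Putting $s=-m$ and $k=m+1$, the Pochhammer factor $(-m)_{m+1}=(-m)(-m+1)\cdots((-m)+m)=(-m)(-m+1)\cdots 0$ contains a zero factor, hence $(-m)_{m+1}=0$, and therefore $\Delta_c^{m+1}n^{m}=0$ for every $n\geq 1$. (Alternatively, one checks directly that $\Delta_c$ sends a polynomial of degree $d$ to one of degree $<d$, because $\sum_{i=1}^{c}a_{c,i}=(c-1)+(1-c)=0$ annihilates the leading coefficient; iterating $m+1$ times kills $n^{m}$.) So each summand in $\sum_{n=1}^{cN}a_{c,n}\Delta_c^{m+1}n^{m}$ is $0$, the limit term disappears, and we are left with
\begin{equation*}
\zeta_{(c)}(-m)=\sum_{j=0}^{m}\frac{(-1)^j}{c^{j+1}}\left(\sum_{i=0}^{c-2}(c+i)\Delta_c^j(i+1)^{m}-(c-1)^2\Delta_c^jc^{m}\right).
\end{equation*}

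Finally I would pass from $\zeta_{(c)}(-m)$ back to $\zeta(-m)$ using the definition (\ref{A}): $\zeta_{(c)}(-m)=(1-c^{1-(-m)})\zeta(-m)=(1-c^{m+1})\zeta(-m)$, and since $c\geq 2$ forces $c^{m+1}\geq 2$, the factor $1-c^{m+1}$ is nonzero, so dividing gives exactly (\ref{zcc}). I do not expect a genuine obstacle here: the only points requiring attention are (i) confirming that (\ref{zcs}) is being evaluated strictly inside its proven domain of validity---which is why $k=m+1$, rather than $k=m$, is the correct choice---and (ii) the vanishing of $(-m)_{m+1}$, hence of $\Delta_c^{m+1}n^{m}$; both are immediate.
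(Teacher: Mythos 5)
Your proposal is correct and follows essentially the same route as the paper: evaluate the continued identity of Theorem~\ref{gsondow} at $s=-m$ and observe that $(-m)_{m+1}=0$ forces $\Delta_c^{m+1}n^{m}=0$ via (\ref{deltac}), so the tail series vanishes. You are merely more explicit than the paper about choosing $k=m+1$, verifying that $s=-m$ lies in the proven half plane $\mathrm{Re}(s)>-k$, and noting $1-c^{m+1}\neq 0$; these are worthwhile clarifications but not a different argument.
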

	\begin{proof} In this case $(-m)_j=0$ for $j>m,$ so by (\ref{deltac}), $\Delta_c^jn^m=0$. Thus if letting  $s=-m$ in (\ref{zc}), then we get our result.
	\end{proof}

\section{Generalizations of Wallis' formula}

With the above preparations, we can extend  Yung, Sondow and Huylebrouck's methords to $\zeta_{(c)}(s)$
and obtain the following two generalizations of Wallis' formula.

\begin{theorem}\label{main}
For any integer $c\geq 2$ and Re$(s) > 0$, we have
\begin{equation}\label{Theorem2}
e^{\zeta_{(c)}^{\prime}(s)}=\prod_{n=1}^{\infty}n^{\frac{(-1)a_{c,n}}{n^s}},
\end{equation}
where the coefficients $a_{c,n}$ is defined as in (\ref{C}).
\end{theorem}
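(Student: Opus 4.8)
The plan is to reduce \eqref{Theorem2} to the identity
$$
\zeta_{(c)}'(s)=-\sum_{n=1}^{\infty}a_{c,n}\frac{\log n}{n^{s}},\qquad \text{Re}(s)>0,
$$
where the series is understood as the ordinary limit of its partial sums; exponentiating is then precisely \eqref{Theorem2}, the $n=1$ factor being $1^{-a_{c,1}}=1$ because $\log 1=0$. For $\text{Re}(s)>1$ there is nothing to prove: $\zeta_{(c)}(s)=\sum_{n\ge 1}a_{c,n}n^{-s}$ converges absolutely and locally uniformly, so it may be differentiated term by term. The content of the theorem is the extension to the strip $0<\text{Re}(s)\le 1$, where $\sum_{n}a_{c,n}\log n\cdot n^{-s}$ is only conditionally convergent.

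First I would differentiate the representation furnished by Theorem~\ref{gsondow} with $k=1$: for $\text{Re}(s)>0$,
$$
\zeta_{(c)}(s)=F(s)-\frac{1}{c}\sum_{n=1}^{\infty}a_{c,n}\Delta_{c}^{1}n^{-s},\qquad F(s)=\frac{1}{c}\left(\sum_{i=0}^{c-2}(c+i)(i+1)^{-s}-(c-1)^{2}c^{-s}\right),
$$
with $\Delta_{c}^{1}n^{-s}=\sum_{i=1}^{c}a_{c,i}(n+i)^{-s}$, the series converging absolutely and uniformly on compact subsets of the half-plane $\text{Re}(s)>0$. Since $F$ is a finite sum of entire functions and the series converges locally uniformly to a holomorphic function, Weierstrass's theorem allows differentiation under the summation sign, giving
$$
\zeta_{(c)}'(s)=F'(s)-\frac{1}{c}\sum_{n=1}^{\infty}a_{c,n}\,\frac{d}{ds}\Delta_{c}^{1}n^{-s}.
$$

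Next I would identify the right-hand side using Theorem~\ref{Euler1}. Put $b_{n}=-\log n\cdot n^{-s}$; then $\frac{d}{ds}\Delta_{c}^{1}n^{-s}=\Delta_{c}^{1}b_{n}$ by \eqref{gc}, and $F'(s)$ is exactly the finite part occurring in \eqref{anc} for $k=1$ and this $b_{n}$. Hence, writing $S_{c,n}$ for the $n$-th partial sum of the right-hand side of \eqref{anc} (with $k=1$ and $b_{n}=-\log n\cdot n^{-s}$), the last display says $S_{c,n}\to\zeta_{(c)}'(s)$ as $n\to\infty$. On the other hand, the partial-sum identity \eqref{add2} from the proof of Theorem~\ref{Euler1} writes $\sum_{k=1}^{n+1}a_{c,k}b_{k}-S_{c,n}$ as a $\mathbb{Z}$-linear combination of boundedly many $b_{k}$ with index near $n$; since $b_{n}=-\log n\cdot n^{-s}\to 0$ for $\text{Re}(s)>0$, this combination tends to $0$. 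Therefore $\sum_{n=1}^{\infty}a_{c,n}b_{n}$ converges in the ordinary sense to $\zeta_{(c)}'(s)$, i.e.\ $\zeta_{(c)}'(s)=-\sum_{n=1}^{\infty}a_{c,n}\frac{\log n}{n^{s}}$, and exponentiating gives \eqref{Theorem2} on $\text{Re}(s)>0$. At $s=1$ the representation and all the series above remain valid, $\zeta_{(c)}$ being holomorphic there, so no separate treatment of that point is required.

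I expect the main obstacle to be this very passage from conditional convergence to a legitimate termwise differentiation: one may not differentiate the conditionally convergent series $\sum_{n}a_{c,n}n^{-s}$ term by term in $0<\text{Re}(s)\le 1$, nor rearrange $\sum_{n}a_{c,n}\log n\cdot n^{-s}$ at will. The device is to differentiate instead the \emph{absolutely} convergent representation of Theorem~\ref{gsondow}, and then to use the block-summation bookkeeping in the proof of Theorem~\ref{Euler1}---whose only analytic input is $b_{n}\to 0$---to read the differentiated series back as $-\sum_{n}a_{c,n}\frac{\log n}{n^{s}}$ in the sense of ordinary convergence of partial sums (equivalently, of partial products in \eqref{Theorem2}). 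It is worth recording along the way that $\sum_{i=1}^{c}a_{c,i}=0$, which is what makes $\Delta_{c}$ gain a power of $n$ and so underlies the absolute convergence in Theorem~\ref{gsondow} that we are invoking.
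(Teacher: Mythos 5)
Your proof is correct, but it follows a genuinely different route from the paper's. The paper's own argument is very short: since the partial sums $\sum_{n\leq N}a_{c,n}$ are bounded, the Jensen--Cahen theorem for Dirichlet series gives convergence and holomorphy of $\sum_{n}a_{c,n}n^{-s}$ on Re$(s)>0$ together with termwise differentiability, the identity with $\zeta_{(c)}(s)$ extends from Re$(s)>1$ by analytic continuation, and exponentiating the termwise derivative is \eqref{Theorem2}. You instead stay entirely inside the paper's own machinery: you differentiate the absolutely and locally uniformly convergent $k=1$ representation of Theorem \ref{gsondow} (legitimate by Weierstrass, which also gives ordinary convergence of the differentiated partial sums, so no absolute convergence of the differentiated series is needed), observe that the result is exactly the Euler transform of $b_n=-n^{-s}\log n$, and then run the partial-sum identity \eqref{add2} backwards --- it is a purely algebraic identity, so only $b_n\to 0$ is needed --- to recover ordinary convergence of $-\sum_{n}a_{c,n}n^{-s}\log n$ to $\zeta_{(c)}'(s)$. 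What each buys: the paper's proof is a two-line reduction to standard Dirichlet-series theory (at the cost of an external citation and of leaving the conditional convergence in the strip $0<\text{Re}(s)\leq 1$ implicit); yours is longer but self-contained, and makes explicit exactly why the conditionally convergent series of logarithms converges there. Your handling of $s=1$ by continuity and the remark that $\sum_{i=1}^{c}a_{c,i}=0$ drives the gain of a power of $n$ are both correct; the only thing I would add is an explicit sentence noting that \eqref{add2} holds for an arbitrary sequence $(b_n)$, since the paper states it only in the forward direction (from convergence of $A_c$ to convergence of its transform).
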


\begin{remark}
Setting $c=2$ in Theorem \ref{main}, we have $a_{2,n}=(-1)^{n-1},$
so (\ref{Theorem2}) becomes to 
\begin{equation}
e^{\eta^{\prime}(s)}=\frac{2^{\frac{1}{2^s}}\cdot 4^{\frac{1}{4^s}}\cdot 6^{\frac{1}{6^s}}\cdot\cdots}{3^{\frac{1}{3^s}}\cdot 5^{\frac{1}{5^s}}\cdot 7^{\frac{1}{7^s}}\cdot\cdots},
\end{equation}
which is equivalent to (\ref{Theorem1}) above.
\end{remark}

\begin{proof}[Proof of Theorem \ref{main}.]
Recall that 
$$\zeta_{(c)}(s)=(1-c^{1-s})\zeta(s).$$
Since the only pole for $\zeta(s)$ is at $s=1$
and the factor $1-c^{1-s}$ has a simple zero at $s=1$,
$\zeta_{(c)}(s)$ is analytic on the whole complex plane $\mathbb{C}$.
By (\ref{C}), for any $N\in\mathbb{N}$, the partial sums $\sum_{n=1}^{N}a_{c,n}$ are all bounded,
thus by Jensen-Cahen's theorem (see, e.g., Conrad's lecture note on Dirichlet series \cite[Theorem 9]{KC}), the series
$\sum_{n=1}^{\infty}\frac{a_{c,n}}{n^s}$ is convergent and analytic on the half plane Re$(s)>0$,
with its derivative there computable termwise.
So $$\zeta_{(c)}(s)=\sum_{n=1}^{\infty}\frac{a_{c,n}}{n^s}$$
is an identity of analytic functions for Re$(s)>0$
and in this area we have
\begin{equation*}
\begin{aligned}
\zeta_{(c)}^{\prime}(s)&=\sum_{n=1}^{\infty}a_{c,n}(-1)n^{-s}\log n\\
&=\log \prod_{n=1}^{\infty} n^{\frac{(-1)a_{c,n}}{n^s}}.
\end{aligned}
\end{equation*}
This is equivalent to
\begin{equation}
e^{\zeta_{(c)}^{\prime}(s)}=e^{\log \prod_{n=1}^{\infty} n^{\frac{(-1)a_{c,n}}{n^s}}}=\prod_{n=1}^{\infty}n^{\frac{(-1)a_{c,n}}{n^s}},
\end{equation}
which is our result.

\end{proof}
\begin{theorem}\label{re-1}
	For any integer $c\geq 2$ and Re$(s)>-1$, we have
	\begin{equation}\label{Theorem 6}
		e^{\zeta_{(c)}^{\prime}(s)}=\left(\prod_{i=2}^{c-1}i^{-\frac{c+i-1}{ci^s}}\right)\cdot c^{\frac{(c-1)^2}{c^{s+1}}}\lim_{N\rightarrow\infty}\prod_{n=1}^{cN}\prod_{i=1}^{c}(n+i)^{\frac{a_{c,n}a_{i,c}}{c(n+i)^s}},
	\end{equation}
	where the coefficients $a_{c,n}$ is defined as in (\ref{C}). Note that the first product is defined to be $1$ for $c=2$.
\end{theorem}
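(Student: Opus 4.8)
The plan is to derive the identity by differentiating the $k=1$ case of Theorem~\ref{gsondow}, that is, \eqref{k1}, and then exponentiating, exactly as Theorem~\ref{main} was obtained from~\eqref{B}. By Theorem~\ref{gsondow}, the partial sums
\begin{equation*}
f_N(s)=\frac{1}{c}\left(\sum_{i=0}^{c-2}(c+i)(i+1)^{-s}-(c-1)^2c^{-s}-\sum_{n=1}^{cN}a_{c,n}\sum_{i=1}^{c}a_{i,c}(n+i)^{-s}\right)
\end{equation*}
are entire functions converging, as $N\to\infty$, uniformly on compact subsets of the half plane $\mathrm{Re}(s)>-1$ (by the estimate~\eqref{6} with $k=1$), and their limit is $\zeta_{(c)}(s)=(1-c^{1-s})\zeta(s)$, which is entire. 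Hence \eqref{k1} is an identity of holomorphic functions on all of $\mathrm{Re}(s)>-1$, the value at $s=1$ included (equivalently, by the identity theorem, since both sides are holomorphic near $s=1$ and agree for $\mathrm{Re}(s)>1$).

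First I would differentiate. Since $f_N\to\zeta_{(c)}$ uniformly on compact subsets of $\mathrm{Re}(s)>-1$, Weierstrass' theorem gives $f_N'\to\zeta_{(c)}'$ uniformly on compacts, where $f_N'$ is simply the termwise $s$-derivative of the finite sum defining $f_N$. Using $\tfrac{d}{ds}m^{-s}=-m^{-s}\log m$, this yields
\begin{equation*}
\zeta_{(c)}^{\prime}(s)=-\frac{1}{c}\sum_{i=0}^{c-2}(c+i)(i+1)^{-s}\log(i+1)+\frac{(c-1)^2}{c}c^{-s}\log c+\frac{1}{c}\lim_{N\to\infty}\sum_{n=1}^{cN}a_{c,n}\sum_{i=1}^{c}a_{i,c}(n+i)^{-s}\log(n+i).
\end{equation*}
The $i=0$ summand of the first sum equals $c\cdot 1^{-s}\log 1=0$, so that sum may be taken from $i=1$, and after a shift of the summation index it becomes $\sum_{i=2}^{c-1}(c+i-1)i^{-s}\log i$, an empty (hence zero) sum when $c=2$.

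Then I would exponentiate, using $\exp(a\log b)=b^{a}$ together with the continuity of $\exp$, the latter converting the limit of the partial sums over $n\leq cN$ into the limit of the corresponding finite products; this produces precisely the asserted formula, the first product being $1$ for $c=2$ by the remark just made. The main obstacle is the analytic bookkeeping rather than any single delicate estimate: one must (i) invoke Theorem~\ref{gsondow} for $k=1$, together with the entireness of $\zeta_{(c)}$, to be certain that \eqref{k1} is valid throughout $\mathrm{Re}(s)>-1$; (ii) justify passing $\tfrac{d}{ds}$ through both the limit in $N$ and the summation, for which the uniform convergence on compact sets guaranteed by~\eqref{6} is essential; and (iii) keep the product grouped into consecutive blocks of length $c$, which is why the statement must be written with $\lim_{N\to\infty}\prod_{n=1}^{cN}$ rather than as an unrestricted infinite product. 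Once these points are in place, the remaining computation is the routine one displayed above.
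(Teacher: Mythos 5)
Your proposal is correct and follows essentially the same route as the paper: differentiate the $k=1$ identity \eqref{k1} term by term and exponentiate, with the $i=0$ term dropping out because $\log 1=0$. The only difference is that you spell out the justifications (validity of \eqref{k1} at $s=1$ via the identity theorem, and termwise differentiation via uniform convergence on compacta) that the paper leaves implicit.
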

\begin{proof}
	Taking derivatives on  the both sides of (\ref{k1}), we have 
	\begin{equation}\label{k1'}
		\begin{aligned}
			\zeta^{\prime}_{(c)}(s)=&-\frac{1}{c}\left(\sum_{i=2}^{c-1}(c+i-1)i^{-s}\log i-(c-1)^2c^{-s}\log c\right)\\&+\frac{1}{c}\lim_{N\rightarrow\infty}\sum_{n=1}^{cN}a_{c,n}\sum_{i=1}^{c}a_{i,c}(n+i)^{-s}\log(n+i)\\
			=&\log\left(\left(\prod_{i=2}^{c-1}i^{-\frac{c+i-1}{ci^s}}\right)\cdot c^{\frac{(c-1)^2}{c^{s+1}}}\lim_{N\rightarrow\infty}\prod_{n=1}^{cN}\prod_{i=1}^{c}(n+i)^{\frac{a_{c,n}a_{i,c}}{c(n+i)^s}}\right),
		\end{aligned}
	\end{equation}
	so 
	\begin{equation}
		e^{\zeta_{(c)}^{\prime}(s)}=\left(\prod_{i=2}^{c-1}i^{-\frac{c+i-1}{ci^s}}\right)\cdot c^{\frac{(c-1)^2}{c^{s+1}}}\lim_{N\rightarrow\infty}\prod_{n=1}^{cN}\prod_{i=1}^{c}(n+i)^{\frac{a_{c,n}a_{i,c}}{c(n+i)^s}},
	\end{equation}
	which is the desired result.
\end{proof}

\section{Examples}\label{Exp}
In this section, we show some examples for our extensions of Wallis' formula in the above section.
First, we go to the case when $c=3$.
In this case, by (\ref{B}) we have
\begin{equation*}
\begin{aligned}
\zeta_{(3)}(s) 
&=1+\left(\frac{1}{2^s}-\frac{2}{3^s}+\frac{1}{4^s}\right)+\left(\frac{1}{5^s}-\frac{2}{6^s}+\frac{1}{7^s}\right)+\cdots\end{aligned}
\end{equation*}
and
\begin{equation*}
\begin{aligned}
 \zeta_{(3)}^{\prime}(s)&= 
\left((-1)2^{-s}\log2+2\cdot3^{-s}\log3+(-1)4^{-s}\log4\right)\\
&\quad+\left((-1)5^{-s}\log5+2\cdot 6^{-s}\log6+(-1)7^{-s}\log7\right)+\cdots,\\
&=\log \frac{3^{\frac{1}{3^s}}\cdot3^{\frac{1}{3^s}}}{2^{\frac{1}{2^s}}\cdot4^{\frac{1}{4^s}}} \frac{6^{\frac{1}{6^s}}\cdot6^{\frac{1}{6^s}}}{5^{\frac{1}{5^s}}\cdot7^{\frac{1}{7^s}}}\frac{9^{\frac{1}{9^s}}\cdot9^{\frac{1}{9^s}}}{8^{\frac{1}{8^s}}\cdot10^{\frac{1}{10^s}}}\cdots.
\end{aligned}
\end{equation*}
  So (\ref{Theorem2}) becomes to 
\begin{equation}\label{case3}
e^{\zeta_{(3)}^{\prime}(s)}=\frac{3^{\frac{1}{3^s}}\cdot3^{\frac{1}{3^s}}}{2^{\frac{1}{2^s}}\cdot4^{\frac{1}{4^s}}} \frac{6^{\frac{1}{6^s}}\cdot6^{\frac{1}{6^s}}}{5^{\frac{1}{5^s}}\cdot7^{\frac{1}{7^s}}}\frac{9^{\frac{1}{9^s}}\cdot9^{\frac{1}{9^s}}}{8^{\frac{1}{8^s}}\cdot10^{\frac{1}{10^s}}}\cdots,
\end{equation}
for Re$(s)>0$.
Furthermore, by \cite[Eq. (3)]{Choudhury}  we have the following Laurent series expansion of $\zeta(s)$ around $s=1$
\begin{equation}\label{1}
\zeta(s)=\frac{1}{s-1}+\sum_{n=0}^{\infty}\frac{(-1)^{n}\gamma_{n}}{n!}(s-1)^{n},
\end{equation}
where $\gamma_{n}$ is the Stieltjes constant in 1885 (see Stieltjes' original article \cite{St} and  Ferguson \cite{Fe})
and $\gamma=\gamma_{0}$ is the Euler-Mascheroni constant.
Since 
\begin{equation} 
\begin{aligned}
3^{1-s}&=e^{(1-s)\log 3}\\
&=1+(1-s)\log 3+\frac{(1-s)^{2}(\log 3)^{2}}{2!}+\frac{(1-s)^{3}(\log 3)^{3}}{3!}+\cdots,
\end{aligned}
\end{equation}
we get \begin{equation}\label{2} 1-3^{1-s}=(s-1)\log3+(s-1)(1-s)\frac{(\log 3)^{2}}{2!}+\cdots.\end{equation}
Then combing (\ref{1}) and (\ref{2})  we have
\begin{equation}
\begin{aligned}
\zeta_{(3)}(s)&=(1-3^{1-s})\zeta(s)\\
&=\left(\frac{1}{s-1}+\gamma-\gamma_{1}(s-1)+\cdots\right)\\
&\quad\times\left((s-1)\log3+(s-1)(1-s)\frac{(\log 3)^{2}}{2!}+\cdots\right)\\
&=\log 3+\left(\gamma \log 3-\frac{(\log 3)^{2}}{2}\right)(s-1)+\cdots
\end{aligned}
\end{equation}
and \begin{equation}\label{3}\zeta_{(3)}^{\prime}(1)=\log3\left(\gamma-\frac{\log 3}{2}\right).\end{equation}
In fact, for any integer $c\geq 2$, from the same reasoning we can also get
 $$\zeta_{(c)}^{\prime}(1)=\log c\left(\gamma-\frac{\log c}{2}\right).$$
 So (\ref{case3})  implies another generalization of Wallis'  formula,
 \begin{equation}\label{gen2}
 3^{\gamma-\frac{\log 3}{2}}=\frac{3^{\frac{1}{3}}\cdot3^{\frac{1}{3}}}{2^{\frac{1}{2}}\cdot4^{\frac{1}{4}}} \frac{6^{\frac{1}{6}}\cdot6^{\frac{1}{6}}}{5^{\frac{1}{5}}\cdot7^{\frac{1}{7}}}\frac{9^{\frac{1}{9}}\cdot9^{\frac{1}{9}}}{8^{\frac{1}{8}}\cdot10^{\frac{1}{10}}}\cdots.
 \end{equation}
Since $\zeta(2)=\frac{\pi^2}{6}$ and $$\zeta^{\prime}(2)=\frac{\pi^2}{6}\left(\gamma+\log 2\pi-12\log A\right),$$
where $\gamma$ is the Euler-Mascheroni constant and $A$ is the Glaisher-Kinkelin constant (see \cite[Eq. (13)]{Weisstein}),
by (\ref{A}) we have
\begin{equation*}
\begin{aligned}
\zeta_{(3)}^{\prime}(2)&=\frac{1}{3}\zeta(2)\log 3+\frac{2}{3}\zeta^{\prime}(2)\\
&=\frac{1}{3}\left(\frac{\pi^2}{6}\right)\log 3+\frac{2}{3}\left(\frac{\pi^2}{6}\left(\gamma+\log 2\pi-12\log A\right)\right)\\
&=\frac{\pi^2}{18}\log\left( 3\cdot\left(\frac{2\pi e^{\gamma}}{A^{12}}\right)^{2}\right).
\end{aligned}
\end{equation*}
So (\ref{case3}) also implies the following generalization of Wallis'  formula,
\begin{equation}\label{gen3}
\left(3\left(\frac{2\pi e^{\gamma}}{A^{12}}\right)^{2}\right)^{\frac{\pi^2}{18}}=\frac{3^{\frac{1}{3^2}}\cdot3^{\frac{1}{3^2}}}{2^{\frac{1}{2^2}}\cdot4^{\frac{1}{4^2}}} \frac{6^{\frac{1}{6^2}}\cdot6^{\frac{1}{6^2}}}{5^{\frac{1}{5^2}}\cdot7^{\frac{1}{7^2}}}\frac{9^{\frac{1}{9^2}}\cdot9^{\frac{1}{9^2}}}{8^{\frac{1}{8^2}}\cdot10^{\frac{1}{10^2}}}\cdots.\end{equation}
Although Theorem \ref{main} can not be applied to the point $s=0$, we can use Theorem \ref{re-1}. By (\ref{k1}) we have an identity of analytic functions on the half plane Re$(s)>-1,$
\begin{equation*}
	\begin{aligned}
		\zeta_{(3)}(s) 
		=&\frac{1}{3}(3\cdot1^{-s}+4\cdot2^{-s}-4\cdot3^{-s})\\&-\frac{1}{3}\lim_{N\rightarrow\infty}\sum_{n=1}^{3N}a_{n,3}((n+1)^{-s}+(n+2)^{-s}-2(n+3)^{-s}),
		\end{aligned}
\end{equation*}
so
\begin{equation*}
	\begin{aligned}
		\zeta_{(3)}^{\prime}(s)=&-\frac{1}{3}(4\cdot2^{-s}\log2-4\cdot3^{-s}\log3)+\frac{1}{3}\lim_{N\rightarrow\infty}\sum_{n=1}^{3N}a_{n,3}((n+1)^{-s}\log(n+1)\\&+(n+2)^{-s}\log(n+2)-2(n+3)^{-s}\log(n+3))\\
		=&\log\frac{3^{\frac{4}{3\cdot3^s}}\cdot2^{\frac{1}{3\cdot2^s}}\cdot3^{\frac{1}{3\cdot3^s}}\cdot3^{\frac{1}{3\cdot3^s}}\cdot4^{\frac{1}{3\cdot4^s}}\cdot6^{\frac{2}{3\cdot6^s}}\cdot6^{\frac{2}{3\cdot6^s}}\cdots}{2^{\frac{4}{3\cdot2^s}}\cdot4^{\frac{1}{3\cdot4^s}}\cdot4^{\frac{1}{3\cdot4^s}}\cdot5^{\frac{1}{3\cdot5^s}}\cdot5^{\frac{1}{3\cdot5^s}}\cdot4^{\frac{2}{3\cdot4^s}}\cdot5^{\frac{2}{3\cdot5^s}}\cdots}.
	\end{aligned}
\end{equation*}
Thus (\ref{Theorem 6}) becomes to
\begin{equation}\label{case4}
	e^{\zeta_{(3)}^{\prime}(s)}=\frac{3^{\frac{4}{3\cdot3^s}}\cdot2^{\frac{1}{3\cdot2^s}}\cdot3^{\frac{1}{3\cdot3^s}}\cdot3^{\frac{1}{3\cdot3^s}}\cdot4^{\frac{1}{3\cdot4^s}}\cdot6^{\frac{2}{3\cdot6^s}}\cdot6^{\frac{2}{3\cdot6^s}}\cdots}{2^{\frac{4}{3\cdot2^s}}\cdot4^{\frac{1}{3\cdot4^s}}\cdot4^{\frac{1}{3\cdot4^s}}\cdot5^{\frac{1}{3\cdot5^s}}\cdot5^{\frac{1}{3\cdot5^s}}\cdot4^{\frac{2}{3\cdot4^s}}\cdot5^{\frac{2}{3\cdot5^s}}\cdots}.
\end{equation}
On the other hand, since $\zeta(0)=-\frac{1}{2}$ and $\zeta^{\prime}(0)=-\frac{1}{2}\log 2\pi$ (see \cite[p. 1049, 9.542]{GR}),
by (\ref{A}) we have 
\begin{equation*}
	\begin{aligned}
		\zeta_{(3)}^{\prime}(0)&=3\zeta(0)\log 3+(-2)\zeta^{\prime}(0)\\
		&=3\left(-\frac{1}{2}\right)\log 3+(-2)\left(-\frac{1}{2}\log 2\pi\right)\\
		&=\log\frac{2\pi}{3^{\frac{3}{2}}},
	\end{aligned}
\end{equation*}
so (\ref{case4})  implies another generalization of Wallis'  formula,
\begin{equation}\label{gen1}
	\frac{2\pi}{3^{\frac{3}{2}}}=\frac{3^{\frac{4}{3}}}{2^{\frac{4}{3}}}
	\frac{2^{\frac{1}{3}}\cdot3^{\frac{1}{3}}\cdot3^{\frac{1}{3}}\cdot4^{\frac{1}{3}}\cdot6^{\frac{2}{3}}\cdot6^{\frac{2}{3}}}{4^{\frac{1}{3}}\cdot4^{\frac{1}{3}}\cdot5^{\frac{1}{3}}\cdot5^{\frac{1}{3}}\cdot4^{\frac{2}{3}}\cdot5^{\frac{2}{3}}}\cdots.\end{equation}
Finally, we need to remark here that  similar forms with (\ref{gen1}), (\ref{gen2}) and (\ref{gen3}) are also established for $c=4, 5, 6,\ldots$.
For example, if $c=4$, then (\ref{gen1}) becomes to
\begin{equation}\label{gen4}
\frac{(2\pi)^{\frac{3}{2}}}{16}=
\frac{4^{\frac{9}{4}}}{2^{\frac{5}{4}}\cdot3^{\frac{3}{2}}}
\frac{2^{\frac{1}{4}}\cdot3^{\frac{1}{4}}\cdot4^{\frac{1}{4}}\cdot3^{\frac{1}{4}}\cdot4^{\frac{1}{4}}\cdot5^{\frac{1}{4}}\cdot4^{\frac{1}{4}}\cdot5^{\frac{1}{4}}\cdot6^{\frac{1}{4}}\cdot8^{\frac{3}{4}}\cdot8^{\frac{3}{4}}\cdot8^{\frac{3}{4}}}
{5^{\frac{1}{4}}\cdot5^{\frac{1}{4}}\cdot5^{\frac{1}{4}}\cdot6^{\frac{1}{4}}\cdot6^{\frac{1}{4}}\cdot6^{\frac{1}{4}}\cdot7^{\frac{1}{4}}\cdot7^{\frac{1}{4}}\cdot7^{\frac{1}{4}}\cdot5^{\frac{3}{4}}\cdot6^{\frac{3}{4}}\cdot7^{\frac{3}{4}}}
\cdots.\end{equation}

\bibliography{central}

\end{document}